\newtheorem{theorem}{Theorem}[section]
\newtheorem{corollary}[theorem]{Corollary}
\newtheorem{definition}[theorem]{Definition}
\newtheorem{proposition}[theorem]{Proposition}
\newenvironment{proof}[1][Proof]{\noindent\textbf{#1.} }{\ \rule{0.5em}{0.5em}}
\begin{document}

\title{The Orbit Group of a Quandle}
\author{Sriram Nagaraj \\
Department of Electrical Engineering\\
The University of Texas, at Dallas\\
Richardson, TX 75083\\
sriram@student.utdallas.edu}
\date{}
\maketitle

\begin{abstract}
We define the notion of the orbit group of a quandle at a point, denoted by $%
O(Q,a)$. We calculate the orbit groups for some elementary quandles and show
that $O(Q,a)$ "counts" the number of orbits of a quandle.

\noindent 
\it{Keywords}: \textup{knot, link, quandle}%
\bigskip

\noindent 
\textup{Mathematics Subject Classification 2000: Primary 57M27; Secondary 55M99}%
\end{abstract}

\section{Introduction}

Quandles are algebraic structures that arise in the theory of knots/link as
their defining axioms correspond to the Reidemeister moves and have been
studied extensively in \cite{J-1}, \cite{J-2}, etc. We will briefly review
basic definitions and facts regarding quandles.

\begin{definition}
A quandle $(Q,\ast )$ is a set $Q$ with a binary operation $\ast :Q\times
Q\longrightarrow Q$ that satisfies the following axioms$:$

$\mathbf{(i)}$ For all $q\in \ Q,$ $q\ast \ q=q$

$\mathbf{(ii)}$ $Q$ is a self distributive set with $\ast $\ as an
operation, i.e. for all $p,q,r\in \ Q,$

\begin{equation*}
(p\ast q)\ast r=(p\ast r)\ast (q\ast r)
\end{equation*}

and finally,

$\mathbf{(iii)}$ For each $p,q\in \ Q$ there is a unique $r\in \ Q$ such
that $p=r\ast q$
\end{definition}

In case $Q$ is commutative, we call $Q$ a commutative quandle\footnote{%
Note that commutative and Abelian are not synonymous in the theory of
quandles (or racks). Abelian quandles satisfy $(a\ast b)\ast (c\ast
d)=(a\ast c)\ast (b\ast d)$ for all $a,b,c,d$ in the quandle (resp. rack)
whereas commutative quandles satisfy $a\ast b=b\ast a$ for all $a,b$ in the
quandle (resp. rack).}.\textit{\ }If $Q$ satisfies axiom $\mathbf{(ii)}$ and 
$\mathbf{(iii)}$ above, it is called a \textbf{rack}\textit{.}

Standard examples of quandles are the following:

\noindent \textbf{Example 1:} (Trivial Quandle) Any set $X$ with the
relation $x\ast y=x$ for all $x,y\in X$ is a quandle called the trivial
quandle. If $|X|=n$, $X$ is called the trivial quandle of rank $n$ and
denoted by $T_{n}$\medskip

\noindent \textbf{Example 2:} (Dihedral Quandles) For integers $i,j\in 
\mathbb{Z}
/n%
\mathbb{Z}
$, the product $i\ast j=2j-i$ (mod $n$) gives the group $%
\mathbb{Z}
/n%
\mathbb{Z}
$ the structure of a quandle called the dihedral quandle written as $D_{n}$%
.\medskip

Other examples can be found in \cite{NH}, \cite{NY} etc.

Consider the map $\ast ^{a}:Q\rightarrow Q$ defined by $\ast ^{a}(b)=b\ast a$%
. This is clearly an automorphism of $Q$ by properties $\mathbf{(ii)}$ and $%
\mathbf{(iii)}$\textbf{\ }defining a quandle. Moreover, by defining $\ast
^{a}\circledast \ast ^{b}=\ast ^{a\ast b}$, we can see that 
\begin{equation*}
\ast ^{a}\circledast \ast ^{a}=\ast ^{a}
\end{equation*}%
\begin{equation*}
(\ast ^{a}\circledast \ast ^{b})\circledast \ast ^{c}=\ast ^{(a\ast b)\ast
c}=\ast ^{(a\ast c)\ast (b\ast c)}=(\ast ^{a\ast c})\circledast (\ast
^{b\ast c})=(\ast ^{a}\circledast \ast ^{c})\circledast (\ast
^{b}\circledast \ast ^{c})
\end{equation*}%
and finally that given $\ast ^{a},\ast ^{b}$ there is a unique $\ast ^{c}$
(with $c$ such that $a=c\ast b$) such that $\ast ^{a}=\ast ^{c}\circledast
\ast ^{b}$ so that%
\begin{equation*}
Q^{\ast }=\{\ast ^{a}|\text{ }a\in Q\}
\end{equation*}

is a quandle in its own right.

By $\mathbf{(iii)}$ of the definition of a quandle, for all $a,b\in Q$ there
exists an \textbf{inverse}, $\ast ^{-1}$of $\ast $ in the sense that $(a\ast
b)\ast ^{-1}b=a$ and $(a\ast ^{-1}b)\ast b=a$. Thus, $Q$ with the operation $%
\ast ^{-1}$ is also a quandle called the inverse quandle of $Q$ and denoted
by $Q^{-1}$. Also, we may define the map $\ast ^{-a}:Q\rightarrow Q$ as $%
\ast ^{-a}(b)=b\ast ^{-1}a$. Moreover, $\ast ^{a}\circ \ast ^{-a}=id$, the
identity map.

The motivation for this paper is to quantitatively analyze the results
obtained in \cite{NY} regarding the orbit decomposition of finite quandles.
Our paper is organized as follows. Section $2$ is devoted to introducing the
idea of paths in a quandle and also connectivity of quandles. The remainder
of the paper will concentrate on defining and analyzing the orbit group $O(Q)
$ of a quandle $Q$. The orbit group "counts" in a certain sense, the number
of orbits in a finite quandle. We compute the orbit group of the trivial
quandle and dihedral quandles and show that they isomorphic to $k$ direct
sums of $%
\mathbb{Z}
/2%
\mathbb{Z}
$ where $k$ is the number of orbits. These computations motivate our main
result on the orbit group of a generic quandle with $k$ orbits.

\section{Paths and Connectivity}

We begin this section with the following definition:

\begin{definition}
Let $a\in Q$. The orbit $O_{a}$ of $a$ is defined as$:$%
\begin{equation*}
O_{a}=\{(...(((a\diamond a_{1})\diamond a_{2})\diamond a_{3})...)\diamond 
\text{ }a_{s}|\text{ }a_{i}\in Q\text{ for }i=1,2...,s\}\text{ and }\diamond 
\text{ }=\{\ast ^{\pm }\}
\end{equation*}%
$Q$ is \textbf{connected} if there is only one orbit i.e. for any $a,b\in Q$%
, there exist $a_{1},a_{2},...,a_{k}\in Q$ such that $b=(...((a\diamond
a_{1})\diamond a_{2})...)\diamond a_{k}$. $($See $\cite{NY}$ for the notion
of strongly-connected or Latin quandles$)$
\end{definition}

In \cite{NY} and \cite{GA} it was shown that any quandle $Q$ may be written
as the disjoint union of indecomposable maximal subquandles, and that
"indecomposable maximal subquandle" and "orbit of an element" are
synonymous. In other words, given a quandle $Q$, we can write $%
Q=\dbigsqcup\limits_{\substack{ \lbrack x]  \\ x\in Q}}O_{x}$, where the
union is over all equivalence classes of $x$ under the identification $x\sim
y$ if $x$ and $y$ are in the same orbit.

Thus, elements in $Q$ are connected iff they are in the same orbit. To
continue with our discussion, we need to define the notion of a path
connecting elements $a,b\in Q$ which lie in a common orbit. A path from $a$
to $b$ is, intuitively, a sequence of elements of $Q$, say $%
a_{1},a_{2},...,a_{k}$ such that $b=(...((a\diamond a_{1})\diamond
a_{2})...)\diamond a_{k}$. Since there are $k$ elements in the path, we call
such a path to be a $k$-path. The best way to formalize this is by
constructing an action of a group $G$ on $Q$. Our candidate for $G$ is the
free group on $Q$ itself. Specifically, let $Q=\{a_{1},a_{2},...,a_{n}\}$
and consider $F(Q)$, the free group on $Q$. We define an action of $F(Q)$ on 
$Q$ as follows:

A word $w\in F(Q)$, $w=w_{1}w_{2}...w_{k}$ (with $w_{i}\in Q$ ) acts on an
element $a\in Q$ as:%
\begin{equation*}
a^{w}=(...((a\ast w_{1})\ast w_{2})\ast ...)\ast w_{k}
\end{equation*}%
We will identify the action of $w_{k}^{-1}\in F(Q)$ on $a\in Q$ by $%
a^{w_{k}^{-1}}=a\ast ^{-1}w_{k}$. A \textbf{reduced} $k$-path may be defined
as a reduced word of length $k$ in $F(Q)$ with the unit element $e$ of $F(Q)$
taking the place of the $0$-path. Also, if $w=w_{1}w_{2}...w_{k}$ and $%
u=u_{1}u_{2}...u_{l}$, 
\begin{equation*}
a^{wu}=(...((...((a\ast w_{1})\ast w_{2})\ast ...\ast )w_{k})\ast u_{1})\ast
...)\ast u_{l}=((...((a\ast w_{1})\ast w_{2})\ast ...)\ast
w_{k})^{u}=(a^{w})^{u}
\end{equation*}%
so that the above definition is legitimately an action of $F(Q)$ on $Q$.

Moreover, for any two elements $a,b\in Q$ and any word $w$ in $F(Q)$ we have
that $(a\ast b)^{w}=a^{w}\ast b^{w}$ since if $w=w_{1}w_{2}...w_{k}$, we have%
\begin{equation*}
(a\ast b)^{w}=(...((a\ast b)\ast w_{1})\ast w_{2}...)\ast w_{k}
\end{equation*}

\begin{equation*}
=(...(((a\ast w_{1})\ast (b\ast w_{1}))\ast w_{2})...)\ast w_{k}
\end{equation*}

\begin{equation*}
=(...(((a\ast w_{1})\ast w_{2})\ast (b\ast w_{1})\ast w_{2}))\ast
w_{3})...)\ast w_{k}
\end{equation*}

\begin{equation*}
=((...((a\ast w_{1})\ast w_{2})...)\ast w_{k})\ast ((...((b\ast w_{1})\ast
w_{2})...)\ast w_{k})=a^{w}\ast b^{w}
\end{equation*}

We also define a path $w=w_{1}w_{2}...w_{k}$ to be disjoint from another
path $u=u_{1}u_{2}...u_{l}$ if $w_{k}u_{1}\neq e$ and $u_{l}w_{1}\neq e$.

\textbf{Example 1:} Consider the (commutative) quandle $Q=%
\{x_{1},x_{2},x_{3}\}$ with the relationships:%
\begin{equation*}
x_{i}\ast x_{j}=x_{k},i\neq j\neq k
\end{equation*}%
and%
\begin{equation*}
x_{i}\ast x_{i}=x_{i}
\end{equation*}%
for $i,j,k=1,2,3$. This is simply the dihedral quandle $D_{3}$. The word $%
w=x_{1}x_{2}x_{3}$ is a $3$-path from $x_{1}$ to $x_{3}$, and is also a path
from $x_{2}$ back to $x_{2}$ (i.e. it is a loop at $x_{2}$\ ).

\section{The orbit group $O(Q,a)$}

In this section, we define the orbit group of a quandle at a point.
Henceforth, we will use the term "point" to refer to an element of $Q$.
Given a point $a\in Q$, a path starting and ending at $a$ is a \textbf{loop}
at $a$. The set of all loops is just the stabilizer $G_{a}$ of the action of 
$F(Q)$ on $Q$. Also, if $a,b$ are connected points of $Q$ (connected by a
path $\alpha $ disjoint from all loops at $a$ and $b$), then $G_{a}$ is
isomorphic to $G_{b}$ via the (invertible) map $p\longmapsto \alpha
^{-1}p\alpha $ that sends loops at $a$ to loops at $b$. Its inverse is the
map $q\longmapsto \alpha q\alpha ^{-1}$ that sends loops at $b$ to loops at $%
a$. The map is clearly a homomorphism since $\alpha ^{-1}pq\alpha =(\alpha
^{-1}p\alpha )(\alpha ^{-1}q\alpha )$.

A simple question to be asked is what sub-quandles of $Q$ are invariant
under the action of $G_{a}$ on $Q$. To answer this, we need the following
definition:

\begin{definition}
Let $a\in Q$. We define the kernel $ker(a)$ of $a$ as follows:%
\begin{equation*}
ker(a)=\{b\in Q|\text{ }b\ast a=a\}
\end{equation*}
\end{definition}

We now have the following:

\begin{proposition}
The kernel $ker(a)$ is invariant under the action of $G_{a}$, in the sense
that for each $p\in G_{a}$ and $b\in ker(a)$ we have that $b^{p}\in ker(a)$.
\end{proposition}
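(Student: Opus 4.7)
The plan is to exploit the two key properties established earlier in the excerpt: first, that the action of $F(Q)$ on $Q$ distributes over $\ast$ in the sense that $(x \ast y)^{w} = x^{w} \ast y^{w}$ for all $x,y \in Q$ and $w \in F(Q)$; and second, that an element $p$ of the stabilizer $G_{a}$ satisfies $a^{p}=a$ by definition. With these two facts in hand, the proof is essentially a one-line calculation.

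Concretely, I would start with $b \in \ker(a)$, so that $b \ast a = a$ by definition. Given any $p \in G_{a}$, I would apply $p$ (viewed as a word in $F(Q)$) to both sides of this equation, obtaining $(b \ast a)^{p} = a^{p}$. Next I would invoke the distributivity of the action to rewrite the left-hand side as $b^{p} \ast a^{p}$, and then use $a^{p}=a$ to rewrite both the new left-hand side and the right-hand side, arriving at $b^{p} \ast a = a$. This is exactly the statement that $b^{p} \in \ker(a)$, which is what we needed.

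There is essentially no obstacle to overcome here: the distributivity identity $(x\ast y)^{w} = x^{w}\ast y^{w}$ was the real content, and that was verified in the preceding discussion by induction on the length of $w$ using axiom \textbf{(ii)}. The only minor caveat to double-check is that the identity $(x\ast y)^w = x^w \ast y^w$ was stated for elements $w \in F(Q)$ written as positive words, so one should verify (or remark) that it extends to words containing inverse letters as well; this follows immediately by applying $\ast^{-1}$ to both sides of the positive-word identity and using $\ast^{a}\circ \ast^{-a} = \mathrm{id}$. Once this is noted, the proposition follows at once.
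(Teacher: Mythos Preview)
Your proposal is correct and follows essentially the same approach as the paper: both use the distributivity $(x\ast y)^{w}=x^{w}\ast y^{w}$ together with $a^{p}=a$ to obtain the chain $b^{p}\ast a = b^{p}\ast a^{p} = (b\ast a)^{p} = a^{p} = a$. Your extra remark about extending the distributivity identity to words containing inverse letters is a useful clarification that the paper leaves implicit.
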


\begin{proof}%
Let $b\in ker(a)$ and consider the product $b^{p}\ast a$. We need to show
that $b^{p}\ast a=a$, since then $b^{p}\in ker(a)$ by the definition above.
To see this, we have the following string of equalities:%
\begin{equation*}
b^{p}\ast a=b^{p}\ast a^{p}=(b\ast a)^{p}=a^{p}=a
\end{equation*}%
Thus, $b^{p}\in ker(a)$.%
\end{proof}%
\medskip

In all that follows, loops are assumed to be reduced. We will also denote by 
$|x|$, to mean $x$ as considered as an element of $Q$. For example, if $q\in
Q$ then $a^{q^{-1}}=a\ast ^{-1}q$ for all $a\in Q$. However, $a^{|q|}$ is,
by definition, $a\ast q$. Thus, the map $|$ $\cdot $ $|$ "forgets" the
inverse action of $Q$ on itself. With this in mind, we have:

\begin{definition}
Two loops $p=w_{1}w_{2}...w_{k}$ and $q=u_{1}u_{2}...u_{l}$ with $p,q\in
F(Q)-\{e\}$ are \textbf{homotopic} if for all $w_{i}$, $i=1,2,...,k$ 
\begin{equation*}
|w_{i}|\text{ }\in O_{|u_{j}|}
\end{equation*}%
for some $j=1,2,...,l$ and for all $u_{j}$, 
\begin{equation*}
|u_{j}|\text{ }\in O_{|w_{i}|}
\end{equation*}%
for some $i=1,2,...,k$. The only loop homotopic to $e\in F(Q)$ is defined to
be itself.
\end{definition}

\textbf{Example 1:} Consider $D_{4}$, the dihedral quandle on $%
\mathbb{Z}
/4%
\mathbb{Z}
$. The orbit decomposition of this quandle is 
\begin{equation*}
D_{4}=O_{1}\sqcup O_{2}
\end{equation*}%
with $\{1,3\}=O_{1}$ and $\{0,2\}=O_{2}$. $p=121$ and $q=13$ are loops at $%
0\in Q$, as 
\begin{equation*}
((0\ast 1)\ast 2)\ast 1=(2\ast 2)\ast 1=(2\ast 1)=0
\end{equation*}%
and 
\begin{equation*}
(0\ast 1)\ast 3=2\ast 3=0
\end{equation*}%
But $p$ and $q$ are not homotopic loops since the letter $2$ of $p$ is not
in the orbit of $1$ or $3$, although all letters of $q$ are in the orbits of
some letters of $p$. The loops $r=00$ and $s=22$ are, however, homotopic.

\begin{proposition}
The relation $\sim $ on $G_{a}$ defined by $p\sim q$ if $p$ and $q$ are
homotopic is an equivalence relation.
\end{proposition}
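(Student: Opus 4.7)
The plan is to reduce the homotopy condition to an equality of sets of orbits, after which the three axioms of an equivalence relation become formal. The key preliminary observation is that, by the orbit decomposition result cited from \cite{NY} and \cite{GA}, the relation ``$x$ and $y$ lie in a common orbit'' is itself an equivalence relation on $Q$; equivalently, $x \in O_y$ iff $y \in O_x$ iff $O_x = O_y$. I would record this once at the start of the proof so it can be used freely.

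Given this, for any loop $p = w_1 w_2 \cdots w_k \in G_a - \{e\}$ I would define
\begin{equation*}
S(p) = \{ O_{|w_i|} : i = 1, 2, \ldots, k \},
\end{equation*}
the (finite) set of orbits that contain at least one letter of $p$. The two-sided condition in the definition of homotopy says precisely that every orbit in $S(p)$ contains some letter of $q$ and vice versa, which by the preliminary observation is equivalent to $S(p) = S(q)$. Thus, for non-identity loops, $p \sim q$ iff $S(p) = S(q)$.

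From this reformulation the three properties follow immediately: $S(p) = S(p)$ gives reflexivity; $S(p) = S(q) \Rightarrow S(q) = S(p)$ gives symmetry; and $S(p) = S(q)$ together with $S(q) = S(r)$ gives $S(p) = S(r)$, i.e.\ transitivity. The identity element $e \in F(Q)$ is a separate case by fiat in the definition, but it is handled trivially: $e$ is declared homotopic only to itself, and this single relation is vacuously reflexive, symmetric, and transitive with respect to the rest of $G_a$. I do not anticipate a real obstacle here; the only substantive point is recognizing that the apparently asymmetric ``for all $w_i$ there exists $u_j$'' formulation collapses to equality of orbit sets, which is what makes transitivity (the step one might otherwise worry about) automatic.
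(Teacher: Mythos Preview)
Your proof is correct and follows essentially the same approach as the paper, which merely asserts that reflexivity is evident, that symmetry follows from the symmetric form of the definition, and that transitivity is ``verified in a similar manner.'' Your reformulation via the orbit-set $S(p)$ is a cleaner packaging of the same idea and makes the transitivity step---the one the paper leaves entirely to the reader---completely transparent.
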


\begin{proof}
Reflexivity is evident. For symmetry, if $p\sim q$ then each letter of $p$
is connected to some letter of $q$ and vice versa hence $q\sim p$.
Transitivity is also verified in a similar manner%
\end{proof}%
\medskip

To complete our construction, we will need to define a product on $%
G_{a}/\sim $. We do this as follows:

Given any two classes of $G_{a}$, say $[p]$ and $[q]$, define 
\begin{equation*}
\lbrack p]\ast \lbrack q]=[pq]
\end{equation*}%
with the product $pq$ obtained from $G_{a}$. Not that this product $\ast $
is not related to the quandle product.

The product $\ast $ is well defined. To see this, let $p_{1}\sim p_{2}$ and $%
q_{1}\sim q_{2}$ with $p_{1}=a_{1}a_{2}...a_{m}$, $p_{2}=b_{1}b_{2}...b_{n}$%
, $q_{1}=c_{1}c_{2}...c_{k}$ and $q_{2}=d_{1}d_{2}...d_{l}$ then 
\begin{equation*}
p_{1}q_{1}=a_{1}a_{2}...a_{m}c_{1}c_{2}...c_{k}
\end{equation*}%
Each letter of $p_{1}q_{1}$ is connected to some letter of $p_{2}q_{2}$ and
vice versa since every $a_{i}$ is connected to some $b_{j}$, (and vice
versa). The same is true for the $c$ and $d$ subscript terms which shows
that $p_{1}q_{1}\sim p_{2}q_{2}$. We define the orbit group $O(Q,a)$to be $%
G_{a}/\sim $ with the product as defined above, i.e. $[p]\ast \lbrack
q]=[pq] $

The last step is to verify that $O(Q,a)$ is a group:

\begin{proposition}
$O(Q,a)$ is a group with $\ast $ as the product
\end{proposition}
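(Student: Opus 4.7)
The plan is to show that $O(Q,a) = G_a/\sim$ is a group by verifying the three remaining axioms — associativity, existence of an identity, and existence of inverses — with the well-definedness of $*$ on equivalence classes already established in the discussion preceding the statement.

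First I would observe that $G_a$ is the stabilizer of the point $a$ under the action of $F(Q)$ on $Q$, hence $G_a \leq F(Q)$ is a bona fide subgroup with the group operation inherited from $F(Q)$ (concatenation followed by free reduction). Since $[p]*[q]$ is defined to be the class of the free-group product $pq$, associativity of $*$ follows immediately from the associativity of concatenation in $F(Q)$: for any $[p],[q],[r] \in O(Q,a)$,
\begin{equation*}
([p]*[q])*[r] = [(pq)r] = [p(qr)] = [p]*([q]*[r]).
\end{equation*}

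Next, for the identity, I would take $[e] \in O(Q,a)$, where $e$ is the unit of $F(Q)$ (the empty / $0$-path). By the definition of homotopy given earlier, $[e]$ is the singleton $\{e\}$, which poses no problem: for any loop $p \in G_a$ we have $[p]*[e] = [pe] = [p]$ and $[e]*[p] = [ep] = [p]$, so $[e]$ is a two-sided identity. For inverses, given $[p]$ with $p = w_1 w_2 \cdots w_k \in G_a$, I would take its inverse to be $[p^{-1}]$, where $p^{-1} = w_k^{-1}\cdots w_1^{-1}$ in $F(Q)$. A quick check shows $p^{-1} \in G_a$: applying $p$ then $p^{-1}$ returns $a$ to itself since $a^{pp^{-1}} = (a^p)^{p^{-1}} = a^{p^{-1}} \cdot$, and since $a^p = a$ we have $a = (a^p)^{p^{-1}} = a^{p^{-1}}$ after using the action relation $a^{pp^{-1}} = a^e = a$. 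Then $[p]*[p^{-1}] = [pp^{-1}] = [e]$, because in $F(Q)$ the word $pp^{-1}$ reduces freely to $e$.

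The main subtlety — and it is a mild one — is simply making sure the identity class is handled correctly in light of the peculiar convention that $e$ is homotopic only to itself. This is what rules out pathologies like some nontrivial loop accidentally falling into the identity class; once we accept the convention, the verifications above are immediate because $*$ on $G_a/\sim$ is a quotient of the group operation on $G_a$, and the axioms descend automatically from the fact that well-definedness has already been proven. I do not anticipate any serious obstacle; the content of the statement is essentially bookkeeping, with the real work having been done in establishing that $\sim$ is compatible with the product.
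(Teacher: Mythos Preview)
Your proof is correct and follows essentially the same approach as the paper: both arguments verify the group axioms by observing that closure, associativity, identity, and inverses all descend from the group structure on $G_a$, with $[p]^{-1}=[p^{-1}]$ and identity $[e]$ (the paper writes $[1]$). Your version is simply more explicit --- you spell out the identity check and the verification that $p^{-1}\in G_a$, whereas the paper compresses all of this into two sentences.
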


\begin{proof}%
The product $\ast $ is closed and associative from the operation in $G_{a}$.
Given a class $[p]$, the inverse class $[p]^{-1}$ is simply $[p^{-1}]$ since 
$[p]\ast \lbrack p^{-1}]=[p^{-1}]\ast \lbrack p]=[1]$ and $[p^{-1}]$ is
unique since $p^{-1}$ is unique. 
\end{proof}%
\medskip

Before we proceed, the following lemma will be necessary for some of our
results:

\begin{proposition}
$O(Q,a)=%
\mathbb{Z}
/2%
\mathbb{Z}
$ for connected quandles $Q$.\footnote{%
If $Q$ is connected, $O(Q,a)=O(Q,b)=%
\mathbb{Z}
/2%
\mathbb{Z}
$ for all $a,b$ in $Q$, so that we can write $O(Q)=%
\mathbb{Z}
/2%
\mathbb{Z}
$ without ambiguity}
\end{proposition}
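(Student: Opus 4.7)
The plan is to use connectedness to collapse the homotopy relation to its simplest possible form. Because $Q$ is connected there is only a single orbit, namely $Q$ itself, so every letter appearing in any reduced word of $F(Q)$ has its underlying element lying in this one orbit. For any two nontrivial loops $p = w_1 \cdots w_k$ and $q = u_1 \cdots u_l$ at $a$, every $|w_i|$ and every $|u_j|$ then automatically satisfy the orbit condition of the homotopy definition, so $p \sim q$. Hence all nontrivial loops at $a$ form a single equivalence class, and $G_a / \sim$ contains at most two classes.

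To guarantee that there really are two, I would exhibit at least one nontrivial loop. The word $aa \in F(Q)$ works: by axiom (i), $(a \ast a) \ast a = a \ast a = a$, so $a^{aa} = a$ and $aa \in G_a$, while $aa \neq e$ as a reduced word in the free group. Therefore $O(Q,a) = G_a/\sim = \{[e],[aa]\}$ has exactly two elements; by the preceding proposition it is a group under $\ast$, and the only group of order two is $\mathbb{Z}/2\mathbb{Z}$.

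The footnote is then immediate: the argument uses no property of $a$ beyond $a \in Q$, so the same conclusion $O(Q,b) = \mathbb{Z}/2\mathbb{Z}$ holds for every $b \in Q$, which makes $O(Q)$ unambiguous. The main conceptual step is the first one, namely recognizing that connectedness renders the homotopy condition vacuous on nontrivial loops; everything afterwards is bookkeeping. The only mild pitfall is to remember to verify that at least one nontrivial loop actually exists, which is where the idempotency axiom enters in a nontrivial way.
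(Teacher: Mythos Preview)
Your proof is correct and follows essentially the same approach as the paper: both use connectedness to show that every nontrivial loop is homotopic to a single fixed loop (the paper uses $a$ itself, you use $aa$), leaving exactly the two classes $[e]$ and $[a]$. Your version is slightly more explicit in verifying that a nontrivial loop actually exists and in invoking the classification of two-element groups, whereas the paper checks $[a]=[a^{-1}]$ directly, but the arguments are otherwise the same.
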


\begin{proof}%
Let $p=a_{1}a_{2}...a_{m}$ be a loop at $a$, with $|a_{i}|$ $\in Q$. Since $%
Q $ is connected each $a_{i}\in O_{a}$ so that $[p]=[a]$. Likewise, each
non-trivial loop at $a$ is homotopic to $[a]$, so that $O(Q,a)$ consists of
only $[1]$ and $[a].$ Also, $[a]=[a^{-1}]$ since $|a^{-1}|$ $=a$ and $a$ is
connected to itself. Hence is $O(Q,a)$ isomorphic to $%
\mathbb{Z}
/2%
\mathbb{Z}
$%
\end{proof}%
\medskip

As our first example, we compute the orbit group of $T_{n}$, the trivial
quandle of $n$ elements. In this case, there are $n$ orbits consisting of
only single elements i.e. $O_{a}=\{a\}$ and every loop is homotopic to its
inverse so that $[p]\ast \lbrack p]=[1]$. Also, $G_{a}=F(Q)$ for each $a$ in 
$Q$. Thus, given any word $p$ in $F(Q)$, $p=a_{1}a_{2}...a_{m}$ we have that 
$[p]=[a_{1}]\ast \lbrack a_{2}]...\ast \lbrack a_{m}]$ since each class is
not homotopic to any other class except itself. So, we can conclude that 
\begin{equation*}
O(T_{n},a)=O(T_{n})=%
\mathbb{Z}
/2%
\mathbb{Z}
\oplus 
\mathbb{Z}
/2%
\mathbb{Z}
\oplus ...\oplus 
\mathbb{Z}
/2%
\mathbb{Z}
\text{ }(n\text{ factors})
\end{equation*}%
Note that the group is independent of the base point, showing that
connectivity is not necessary for base point independence. For connected
quandles, $[p]=[p^{-1}]=[p]^{-1}$ so that $[p]\ast \lbrack p]=[1]$ for all
loops in a conected quandle.

Our next example is the dihedral quandle $D_{n}$. Assume first that $n$ is
odd and consider the products $0\ast j=2j$ $(mod$ $n)$ for $j=0,1,...,n-1$.
Thus, $\{0\ast j|$ $j=0,1,...,n-1\}=\{0,2,4,...,n-1,1,3,...,n-2\}$. Clearly
this product results in the generation of $D_{n}$, so that $O_{0}=D_{n}$.
Next, assume $n$ is even and again consider the orbit of $0$. The products $%
0\ast j$ are $0,2,4,...,n$ for $j=0,1,...,n/2$ and these products repeat for 
$j=n/2+1,...n$, so that $O_{0}=\{0,2,...,n-2\}$. Likewise, $%
O_{1}=\{1,3,...n-1\}$ which shows that $D_{n}=O_{0}\sqcup O_{1}$

As $D_{n}$ is connected for odd $n$, we have:

\begin{corollary}
$O(D_{n},i)=%
\mathbb{Z}
/2%
\mathbb{Z}
$ for all $i=0,1,...(n-1)$ and $n$ odd.
\end{corollary}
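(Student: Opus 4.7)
The plan is to observe that this corollary is an immediate consequence of the earlier proposition asserting $O(Q,a) = \mathbb{Z}/2\mathbb{Z}$ for connected quandles, combined with the connectivity calculation for $D_n$ with $n$ odd that was carried out in the preceding paragraph.

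First, I would point out that the computation just performed shows $O_0 = D_n$ when $n$ is odd: the map $j \mapsto 2j \pmod n$ hits every residue class mod $n$ precisely because $\gcd(2,n)=1$, so starting from $0$ and multiplying by $\ast j$ over all $j$ yields every element of $D_n$. Hence $D_n$ has a single orbit, i.e. $D_n$ is a connected quandle in the sense of Definition~2.1.

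Second, I would invoke the previous proposition, which tells us that for any connected quandle $Q$ and any base point $a$, $O(Q,a) \cong \mathbb{Z}/2\mathbb{Z}$. Applying this with $Q = D_n$ and $a = i$ for any $i \in \{0,1,\ldots,n-1\}$ gives the result. The footnote to the previous proposition already records that the base point is immaterial in the connected case, so writing $O(D_n, i)$ for an arbitrary $i$ is legitimate and all such groups coincide.

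There is no real obstacle here; the corollary is essentially a packaging of two facts already in hand, namely (i) oddness of $n$ forces $2$ to be a unit mod $n$, making $D_n$ connected, and (ii) the orbit group of a connected quandle is $\mathbb{Z}/2\mathbb{Z}$ independently of the chosen base point. The only thing worth emphasizing in the write-up is the connectivity step, since the rest is a direct citation of the previous proposition.
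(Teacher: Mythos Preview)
Your proposal is correct and matches the paper's approach exactly: the paper's proof is simply ``Trivial from the above proposition,'' relying on the connectivity of $D_n$ for odd $n$ (established in the preceding paragraph) together with the proposition that $O(Q,a)=\mathbb{Z}/2\mathbb{Z}$ for connected quandles. Your write-up just spells out these two ingredients more explicitly.
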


\begin{proof}%
Trivial from the above proposition%
\end{proof}%
\medskip

To motivate our discussion further, let us compute $O(D_{4},0)$. From the
example above, $D_{4}=O_{1}\sqcup O_{2}$. Let $p=a_{1}a_{2}...a_{m}$ be a
loop at $0$. Since each of the $a_{i}$ are in either $O_{1}$ or $O_{2}$, $%
[p] $ is either $[0]$, $[11]$ or $[0]\ast \lbrack 11]$ for all loops (the
loops $[11]$ and $[0]$ are, as noted above, not homotopic). Also, $[11]\ast
\lbrack 0]=[0]\ast \lbrack 11]$ so that 
\begin{equation*}
O(D_{4},0)=%
\mathbb{Z}
/2%
\mathbb{Z}
\oplus 
\mathbb{Z}
/2%
\mathbb{Z}%
\end{equation*}

The same computation can be extended to $D_{n}$ for even $n$ to obtain:

\begin{proposition}
$O(D_{n},i)=%
\mathbb{Z}
/2%
\mathbb{Z}
\oplus 
\mathbb{Z}
/2%
\mathbb{Z}
$ for all $i=0,1,...(n-1)$ and $n$ even
\end{proposition}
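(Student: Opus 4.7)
The plan is to mirror the explicit computation of $O(D_4,0)$ carried out just above and extend it to arbitrary even $n$, using the orbit decomposition $D_n=O_0\sqcup O_1$ with $O_0=\{0,2,\ldots,n-2\}$ and $O_1=\{1,3,\ldots,n-1\}$ that was already established. I would fix an arbitrary base point $i\in D_n$. Every letter of a loop at $i$ belongs to $O_0\cup O_1$, so by the homotopy definition two non-trivial loops are equivalent iff they feature letters from the same subset of $\{O_0,O_1\}$. This immediately produces at most four classes: $[e]$, a class $[\alpha_0]$ of loops with letters only in $O_0$, a class $[\alpha_1]$ of loops with letters only in $O_1$, and a class $[\alpha_0\alpha_1]$ whose letters are drawn from both orbits.

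Next I would check that all four classes are realized, independently of $i$. The single-letter word $i$ is a loop at $i$ by idempotence, representing whichever of $[\alpha_0],[\alpha_1]$ corresponds to the orbit of $i$. For the other orbit, pick any $x$ there and compute
\begin{equation*}
(i\ast x)\ast x=(2x-i)\ast x=2x-(2x-i)=i\pmod n,
\end{equation*}
so the reduced word $xx$ is a loop at $i$ using letters only from the other orbit. Concatenating these two single-orbit loops then produces a representative of $[\alpha_0\alpha_1]$, so $|O(D_n,i)|=4$ for every $i$, regardless of parity.

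Finally, I would pin down the group structure by showing every class is self-inverse and that the classes commute. Since $|w^{-1}|=|w|$ for each letter of $F(Q)$, any loop $p$ is homotopic to its free-group inverse $p^{-1}$; so well-definedness of $\ast$ gives $[p]\ast[p]=[p]\ast[p^{-1}]=[pp^{-1}]=[e]$, exactly as in the trivial quandle computation. Commutativity follows because $pq$ and $qp$ share the same orbit content, hence $[pq]=[qp]$. Thus $O(D_n,i)$ is an abelian $2$-group of order $4$, which forces the isomorphism with $\mathbb{Z}/2\mathbb{Z}\oplus\mathbb{Z}/2\mathbb{Z}$. The main obstacle is this self-inverse step: a priori $pp$ has the same orbit content as $p$, tempting one to conclude $[pp]=[p]$ rather than $[e]$, and the temptation must be resolved by substituting $[p]=[p^{-1}]$ and using well-definedness of $\ast$ to allow cancellation in $F(Q)$, exactly the subtle manoeuvre underpinning the trivial-quandle example.
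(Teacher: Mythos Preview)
Your proposal is correct and follows essentially the same approach as the paper, whose proof reads in full ``Same computation as above'' (referring to the $D_4$ computation you set out to mirror). You have simply written out that computation in detail for general even $n$ and arbitrary base point $i$, supplying the explicit loop $xx$ for the orbit not containing $i$ and making the self-inverse step precise via $[p]=[p^{-1}]$ --- exactly the move the paper relies on in its trivial-quandle example.
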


\begin{proof}%
Same computation as above%
\end{proof}%
\medskip

It seems that the number of factors of $%
\mathbb{Z}
/2%
\mathbb{Z}
$ of the orbit group corresponds to the number of orbits of the quandle,
which would show that the orbit group counts the number of orbits of a
quandle. This is true for certain quandles:

\begin{theorem}
If $Q=\sqcup O_{a_{i}}$, $i=1,2,...,k$ with $O_{a_{i}}$ the orbits of $Q$,
such that for each $O_{a_{i}}$, there is an $x_{i}$ in $O_{a_{i}}$ with $%
a\ast x_{i}=a$, then 
\begin{equation*}
O(Q,a)=G_{k}=%
\mathbb{Z}
/2%
\mathbb{Z}
\oplus 
\mathbb{Z}
/2%
\mathbb{Z}
\oplus ...\oplus 
\mathbb{Z}
/2%
\mathbb{Z}
\text{ }(k\text{ factors})
\end{equation*}
\end{theorem}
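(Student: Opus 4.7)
The plan is to produce a presentation of $O(Q,a)$ by $k$ commuting involutions $[x_1], \ldots, [x_k]$, one attached to each orbit, and to verify that no further relations hold among them. The hypothesis that every orbit $O_{a_i}$ contains an element $x_i$ with $a \ast x_i = a$ will be used at two points: it provides a generator supported in each orbit, and it lets one rewrite an arbitrary loop at $a$ as a product of these chosen generators.

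First I would check that each $[x_i]$ is a well-defined involution in $O(Q,a)$. Since $x_i \in \ker(a)$, the single letter $x_i$ is a loop at $a$; applying $\ast^{-1} x_i$ to both sides of $a \ast x_i = a$ yields $a \ast^{-1} x_i = a$, so $x_i^{-1}$ is a loop at $a$ as well. Because $|x_i| = |x_i^{-1}| = x_i$ lies in the single orbit $O_{x_i}$, the words $x_i$ and $x_i^{-1}$ visit the same set of orbits and are therefore homotopic, so $[x_i] = [x_i^{-1}] = [x_i]^{-1}$ and hence $[x_i]^2 = [1]$. A parallel argument gives commutativity: for $i \neq j$ the loops $x_i x_j$ and $x_j x_i$ both lie in $G_a$ (each letter individually fixes $a$) and visit the same pair of orbits $\{O_{x_i}, O_{x_j}\}$, so $x_i x_j \sim x_j x_i$ and $[x_i]\ast[x_j] = [x_j]\ast[x_i]$.

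Next I would show that $[x_1], \ldots, [x_k]$ generate $O(Q,a)$. Given an arbitrary reduced loop $p = w_1 w_2 \cdots w_m$ at $a$, each letter $|w_t|$ belongs to some orbit $O_{x_{j(t)}}$. Form the positive word $q = x_{j(1)} x_{j(2)} \cdots x_{j(m)}$. It is again a loop at $a$ because every one of its letters lies in $\ker(a)$, and by construction its letters visit exactly the same set of orbits as those of $p$; hence $p \sim q$, so $[p] = [x_{j(1)}]\ast \cdots \ast [x_{j(m)}]$. Invoking the involution and commutativity relations established above, each $[x_i]$ can then be collected so that it appears at most once in this product, placing $[p]$ in the subgroup $\langle [x_1], \ldots, [x_k] \rangle$.

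The main obstacle is independence of the generators, which is what distinguishes $(\mathbb{Z}/2\mathbb{Z})^k$ from a proper quotient of it. I would argue by contradiction: if $\{i_1, \ldots, i_r\} \subseteq \{1, \ldots, k\}$ is a non-empty set of distinct indices with $[x_{i_1}] \ast \cdots \ast [x_{i_r}] = [1]$, then the positive word $x_{i_1} \cdots x_{i_r}$ is a non-empty reduced word in $F(Q)$ on distinct generators, hence is not the identity of $F(Q)$. By the definition of $\sim$, only $e$ is homotopic to $e$, so $[x_{i_1} \cdots x_{i_r}] \neq [1]$, a contradiction. Combined with the generation step, this shows that the $k$ classes $[x_i]$ form an independent set of commuting involutions spanning $O(Q,a)$, giving the isomorphism $O(Q,a) \cong G_k = (\mathbb{Z}/2\mathbb{Z})^{\oplus k}$ stated in the theorem.
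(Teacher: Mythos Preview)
Your argument is correct and follows essentially the same line as the paper's proof: identify the $[x_i]$ as commuting involutions, show they generate $O(Q,a)$ by replacing each letter of an arbitrary loop by the chosen $x_i$ from its orbit, and conclude $O(Q,a)\cong(\mathbb{Z}/2\mathbb{Z})^{\oplus k}$. Your version is in fact more complete than the paper's, since you explicitly supply the independence step (using that only $e$ is homotopic to $e$), which the paper leaves implicit.
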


\begin{proof}%
Since the $O_{a_{i}}$ are disjoint, the $a_{i}$ are not connected and also $%
[x_{i}x_{j}]=[x_{i}]\ast \lbrack x_{j}]=[x_{j}]\ast \lbrack x_{i}]$, as $%
x_{i}\in O_{a_{i}}$. Let $p$ be a loop at $a$ so that $a^{p}=a$. Thus $%
p=w_{1}w_{2}...w_{m}$ with $|w_{j}|$ in one of the $O_{a_{i}}$. Now, $%
[p]=[w_{1}w_{2}...w_{m}]=[x_{1}]\ast \lbrack x_{2}]\ast ...\ast \lbrack
x_{l}]$, $l\leq k$. As each of $[x_{i}]$ commute with $[x_{i}]=[x_{i}]^{-1}$%
, we have that they are in $G_{k}$ and hence each $[p]$ is also an element
of $G_{k}$. Since there is an $x_{i}$ in each $O_{a_{i}}$ with $a\ast
x_{i}=a $, the $[x_{i}]$ generate the entire group.%
\end{proof}%
\medskip

\end{document}